\theoremstyle{plain}
\newtheorem{theorem}{Theorem}
\newtheorem{corollary}{Corollary}
\newtheorem{lemma}{Lemma}
\newtheorem{proposition}{Proposition}
\theoremstyle{definition}
\newtheorem{definition}{Definition}
\theoremstyle{remark}
\newtheorem{remark}{Remark}
\DeclareRobustCommand{\seqnum}[1]{%
  \ifmmode
    \text{\href{https://oeis.org/#1}{\textcolor{blue}{{\normalfont\ttfamily #1}}}}%
  \else
    \href{https://oeis.org/#1}{\textcolor{blue}{{\normalfont\ttfamily #1}}}%
  \fi
}
\author{El-Mehdi Mehiri \orcidlink{0000-0002-7164-3658}\\
Mines Saint-Etienne, CMP, Department of Manufacturing Sciences and Logistics, F-13120 Gardanne, France.\\
\url{elmehdi.mehiri@emse.fr}\\
\url{mehiri314@gmail.com}}
\title{\textbf{Bijections Between Smirnov Words\\and Hamiltonian Cycles \\in Complete Multipartite Graphs\thanks{\textcopyright   E.-M.Mehiri 2026}
}}
\date{19-06-2026}
\begin{document}

\maketitle

\begin{abstract}
\noindent We establish a bijective correspondence between Smirnov words with balanced letter multiplicities and Hamiltonian paths in complete $m$-partite graphs $K_{n,n,\ldots,n}$. This bijection allows us to derive closed inclusion-exclusion formulas for the number of Hamiltonian cycles in such graphs. We further extend the enumeration to the generalized nonuniform case $K_{n_1,n_2,\ldots,n_m}$.  We also provide an asymptotic analysis based on Stirling's approximation, which yields compact factorial expressions and logarithmic expansions describing the growth of the number of Hamiltonian cycles in the considered graphs.   

\end{abstract}

 \textbf{Keywords:} Hamiltonian cycle; Smirnov word; multipartite graph; bijection; inclusion-exclusion; combinatorial enumeration; asymptotic analysis.

  \textbf{MSC 2020:} 05A05, 05C45, 05A15, 05C30.

\section{Introduction}

Smirnov words are words over a finite alphabet in which no two consecutive letters are identical \cite{flajolet2009analytic}.  For   recent applications of Smirnov words, the reader is referred to \cite{freiberg2017application,SHARESHIAN2016497}.

In this paper, we establish a new bijection between Smirnov words  with equal letter multiplicities and Hamiltonian paths in complete $m$-partite graphs $K_{n,n,\ldots,n}$. 
This correspondence allows us to count Hamiltonian cycles in $K_{n,n,\ldots,n}$
by purely combinatorial arguments.

Determining whether a given graph admits a Hamiltonian cycle is a well-known NP-complete problem \cite{GareyJohnson1979}. An even more challenging task is to determine the number of distinct Hamiltonian cycles a graph possesses. Owing to this computational difficulty, numerous studies have focused on developing enumeration techniques for specific graph classes, or on establishing upper and lower bounds for the number of Hamiltonian cycles; see, e.g.,  \cite{bodroza2013enumeration,Jacobsen_2007,Thomassen_1996,SARKOZY2003237,SCHWENK198953,Haythorpe02102018,FRIEDGUT_KAHN_2005}.

In the special cases most directly related to complete multipartite graphs, several classical
enumeration results are known. The number of undirected Hamiltonian cycles in the complete
graph $K_m$ is $\frac{(m-1)!}{2}$,  whereas the complete bipartite graph $K_{n,n}$ has $\frac{(n!)^2}{2n}$  undirected Hamiltonian cycles. More generally, Singmaster~\cite{SINGMASTER19751}
enumerated Hamiltonian cycles with a prescribed initial vertex in the cocktail party graph
$K_{2,2,\ldots,2}$. Later, Hor\'ak and T\r{u}v\'arek~\cite{horak1979hamiltonian}
obtained a recursive formula for the number of Hamiltonian cycles in the complete multipartite
graph $K_{n_1,n_2,\ldots,n_m}$. In the balanced case $K_{n,n,\ldots,n}$,
Krasko \emph{et al.}~\cite{krasko2017enumeration} derived an enumeration formula by using
a bijection between Hamiltonian paths in octahedra and loopless generalized chord diagrams,
extending a bijective construction from their earlier work.

The purpose of the present paper is to give a different and direct word-theoretic approach to
this enumeration problem. We establish a bijection between Hamiltonian paths in the complete
multipartite graph $K_{n,n,\ldots,n}$ whose endpoints belong to distinct partite sets and
Smirnov words of length $mn$ with balanced multiplicities and distinct initial and terminal
letters. This correspondence transforms the Hamiltonian cycle counting problem into the
enumeration of constrained Smirnov words. It yields exact recurrence and inclusion-exclusion
formulas, and hence a closed expression for the number of undirected Hamiltonian cycles in
$K_{n,n,\ldots,n}$. We also derive asymptotic and logarithmic estimates using Stirling's
approximation, making explicit the factorial growth of the resulting sequence. Finally, the same
framework is extended to the nonuniform multipartite graph $K_{n_1,n_2,\ldots,n_m}$, as well
as to directed and circular variants of the corresponding Smirnov word model. In particular,
this provides an answer to a problem posed by Donald E. Knuth (2025) in \emph{The American Mathematical Monthly}~\cite[Problem 12561]{Ullman21102025}.

The paper is organized as follows. Section~\ref{sec:AdmissibleWords} introduces the multipartite alphabet, the class of
admissible words, and the reduction from labeled words to balanced color-words. Section~\ref{sec:Enumeration} is
devoted to the enumeration of these color-words under adjacency and endpoint constraints,
including both a recurrence and an inclusion-exclusion formula. In Section~\ref{sec:FromTo}, we construct
the bijection between admissible words and Hamiltonian paths in $K_{n,n,\ldots,n}$, and use it
to obtain the corresponding formula for undirected Hamiltonian cycles. Section~\ref{sec:Asymptotic} gives the
asymptotic analysis of the obtained formulas. Section~\ref{sec:extensions} extends the method to nonuniform complete multipartite graphs and discusses the directed-cycle variant. Finally,
Section~\ref{sec:Conclusion} concludes the paper and suggests further directions.

\section{Admissible Words}\label{sec:AdmissibleWords}

Let the alphabet $\mathcal{A}=\bigcup\limits_{i=1}^{m}\mathcal{A}_{i}$, where $|\mathcal{A}_{i}|=n$, and $\mathcal{A}_{i}\cap \mathcal{A}_{j}=\varnothing$, for all $i,j\in [m]= \{1,\ldots,m\}$, with $i\neq j$.

\begin{definition}
A word $w=w_1w_2\cdots w_{mn}$ over the alphabet $\mathcal{A}$ is called \emph{admissible} if:
\begin{enumerate}
    \item each letter of $\mathcal{A}$ appears exactly once in $w$;
    \item consecutive letters come from distinct subsets (Smirnov word), i.e.
    $w_t \in \mathcal{A}_i,\ w_{t+1} \in \mathcal{A}_j \implies i\neq j;$
    \item the first and last letters belong to different subsets, i.e.
    $w_1 \in \mathcal{A}_i,\ w_{mn} \in \mathcal{A}_j \implies i\neq j.$
\end{enumerate}
The set of all admissible words is denoted $\mathcal{W}_m(n)$,
and its cardinality $W_m(n)=|\mathcal{W}_m(n)|$.
\end{definition}

\begin{definition}
Associate to each subset $\mathcal{A}_i$ a color $i\in[m]$.
Replacing each letter of $w$ by its color yields a
\emph{color-word} over the alphabet $[m]$.
\end{definition}

Let $S_m(n)$ denote the number of color-words of length $mn$
that use each color exactly $n$ times, have no consecutive equal colors,
and begin and end with distinct colors.

\begin{proposition}\label{prop:reduction}
For all integers $m,n\geq1$, we have 
\begin{equation*} 
W_m(n) = S_m(n) (n!)^m.
\end{equation*}
\end{proposition}
\begin{proof}
Every color-word pattern corresponds to $(n!)^m$ admissible words, since the $n$ letters of each subset $\mathcal{A}_i$ may occupy their positions in any order. Hence, the result.
\end{proof}

\section{Enumeration of Color-Words}\label{sec:Enumeration}

\begin{definition}
Let $F_m(a_1,\dots,a_m; s\!\to\!r)$ be the number of color-words, that:
\begin{enumerate}
    \item begin with the color $s$ and end with the color $r$,
    \item have no two consecutive equal colors,
    \item use exactly $a_i$ additional  occurrences of color $i$ after the first letter (so the total length is $1+\sum_i a_i$).
\end{enumerate}
\end{definition}

\begin{lemma}\label{lem:rec}
Let $m\geq 2$, for all nonnegative $a_i$ and $s,r\in[m]$, $F_m(a_1,\dots,a_m; s\!\to\!r)$ satisfy the   recurrence 
\begin{align}
F_m(0,\dots,0; s\!\to\!r)
&=\mathbf{1}_{\{s=r\}}\label{eq:rec-base}\\[0.4em]
F_m(a_1,\dots,a_m; s\!\to\!r)
&=
\sum_{\substack{x=1\\x\neq s}}^{m}
F_m(a_1-\mathbf{1}_{x=1}, \dots, a_m-\mathbf{1}_{x=m}; x\!\to\!r),
\label{eq:recFm}
\end{align}
with the convention that $F_m(\ldots)=0$ whenever some argument $a_i-\mathbf{1}_{x=i}$ is negative, and where the indicator function 
\begin{equation*}
    \mathbf{1}_{x=y}=\begin{cases}
        1,&\text{if $x=y$;}\\
        0,&\text{otherwise.}
    \end{cases}
\end{equation*}
\end{lemma}

\begin{proof}
We proceed by induction  in two steps: base case and inductive (decomposition) step.

For $(a_1,\dots,a_m)=(0,\dots,0)$,   the total length is $1$, so the only possible word is the length $1$ word consisting of the single color $s$. This word ends in $r$ if and only if $r=s$. Hence $F_m(0,\dots,0; s\!\to\!r)=\mathbf{1}_{\{s=r\}}$, proving \eqref{eq:rec-base}.

Assume now $\sum_{i=1}^m a_i=mn\ge 1$, i.e., there are still $mn$ letters to place after the first position.
Every word counted by $F_m(a_1,\dots,a_m; s\!\to\!r)$ has the form
$$
\underbrace{s}_{\text{fixed first}}\;  \underbrace{x}_{\text{second}}\;  \underbrace{(\text{a valid tail of length }mn-1)}_{\text{no equal adjacencies, ends at }r},
$$
where the \emph{second} color $x$ must satisfy (i) $x\neq s$ (to avoid an immediate adjacency violation) and (ii) $a_x\ge 1$ (since one occurrence of color $x$ is used at the second position).

Fix such an $x\in[m]\setminus\{s\}$ with $a_x\ge 1$. Removing the first two letters $(s,x)$ from the word leaves a tail of length $mn-1$ that:
\begin{itemize}
    \item begins with color $x$ (because the third position must differ from $x$ and belongs to the tail's first step from the perspective of the reduced problem),
    \item ends with color $r$ (the overall last letter is unchanged),
    \item uses exactly $a_i' := a_i - \mathbf{1}_{x=i}$ occurrences of color $i$ for each $i$,
    \item has no equal adjacencies (this is inherited from the original word).
\end{itemize}
By definition, the number of such tails is precisely $
F_m\bigl(a_1-\mathbf{1}_{x=1}, \dots, a_m-\mathbf{1}_{x=m}; x\!\to\!r\bigr)
$.

Conversely, given any tail counted by $F_m(a_1-\mathbf{1}_{x=1},\dots,a_m-\mathbf{1}_{x=m}; x\!\to\!r)$, prefixing it by the two-letter block $(s,x)$ produces a valid word counted by $F_m(a_1,\dots,a_m; s\!\to\!r)$, because:
\begin{itemize}
    \item $x\neq s$ enforces the no-adjacency condition at the boundary between the first and second letters,
    \item the tail, by hypothesis, has no equal adjacencies internally,
    \item the total color counts match exactly: we used $1$ occurrence of $x$ (hence reduced $a_x$ by $1$) and left all $a_i$ for $i\neq x$ unchanged.
\end{itemize}

Therefore, for each fixed $x\neq s$ with $a_x\ge 1$, there is a bijection  between words counted by $F_m(a_1,\dots,a_m; s\!\to\!r)$ whose second color is $x$,
  and  
tails counted by $F_m(a_1-\mathbf{1}_{x=1},\dots,a_m-\mathbf{1}_{x=m}; x\!\to\!r)$.
 
The sets corresponding to different choices of $x$ are disjoint (the second letter is uniquely determined), and their union is exactly the set of all words counted by $F_m(a_1,\dots,a_m; s\!\to\!r)$ (every valid word has some second color $x\neq s$ with $a_x\ge 1$). Summing the cardinalities of these disjoint classes over all $x\in[m]\setminus\{s\}$ yields \eqref{eq:recFm},  under the convention that any term with a negative argument (i.e., $a_x-1<0$) contributes $0$, as there can be no word whose second letter is $x$ if no $x$'s remain to place.
\end{proof}

Let $F_m^{(s\to r)}(n)$ denote the number of $mn$-length words over the alphabet $[m]$, that  use each color exactly $n$ times,  begin with color $s$ and end with color $r$, and  have no two consecutive equal colors.

\begin{lemma}\label{lem:symmetry}
Fix $m\ge 2$ and $n\ge 1$. For all $s,r\in  [m]$,
if $s\neq r$ and $s',r'\in[m]$ with $s'\neq r'$, then
\[
F_m^{(s\to r)}(n)=F_m^{(s'\to r')}(n).
\]
In particular, since there are $m(m-1)$ ordered endpoint pairs $(s,r)$ with $s\neq r$,
\[
S_m(n)=\sum_{\substack{s,r\in[m]\\ s\neq r}} F_m^{(s\to r)}(n)
= m(m-1) F_m^{(1\to 2)}(n).
\]
\end{lemma}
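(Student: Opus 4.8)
The plan is to prove the statement in two stages: first the symmetry $F_m^{(s\to r)}(n)=F_m^{(s'\to r')}(n)$ for all ordered pairs of distinct colors, and then the summation formula for $S_m(n)$, which is an immediate consequence. For the symmetry, the natural tool is a relabeling (permutation) bijection on the set of colors. Given any permutation $\sigma\in\mathfrak{S}_m$ of $[m]$, applying $\sigma$ letter-by-letter to a color-word $w$ produces a new color-word $\sigma(w)$; this operation clearly preserves the length $mn$, preserves the property of using each color exactly $n$ times (the multiplicity of color $i$ in $\sigma(w)$ equals the multiplicity of color $\sigma^{-1}(i)$ in $w$, which is $n$), preserves the no-two-consecutive-equal-colors condition (since $w_t=w_{t+1}\iff \sigma(w_t)=\sigma(w_{t+1})$), and sends a word with endpoints $(s,r)$ to one with endpoints $(\sigma(s),\sigma(r))$. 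Moreover $w\mapsto\sigma(w)$ is a bijection with inverse $w\mapsto\sigma^{-1}(w)$.

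The key step is then: given distinct $s,r$ and distinct $s',r'$ in $[m]$, exhibit a permutation $\sigma$ with $\sigma(s)=s'$ and $\sigma(r)=r'$. Since $s\neq r$ and $s'\neq r'$, the partial assignment $s\mapsto s'$, $r\mapsto r'$ is injective on $\{s,r\}$, hence extends to a bijection $\sigma\colon[m]\to[m]$ (for instance, extend arbitrarily on the remaining $m-2$ elements). The bijection above then restricts to a bijection between the set of color-words counted by $F_m^{(s\to r)}(n)$ and the set counted by $F_m^{(s'\to r')}(n)$, giving $F_m^{(s\to r)}(n)=F_m^{(s'\to r')}(n)$.

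Finally, for the formula for $S_m(n)$: by definition $S_m(n)$ counts color-words of length $mn$ using each color exactly $n$ times, with no two consecutive equal colors, and with distinct first and last colors. Partitioning these words according to their ordered endpoint pair $(s,r)$ — which necessarily satisfies $s\neq r$ — gives $S_m(n)=\sum_{s\neq r}F_m^{(s\to r)}(n)$, a disjoint union over the $m(m-1)$ ordered pairs of distinct colors. By the symmetry just established, every term equals $F_m^{(1\to 2)}(n)$ (valid since $1\neq 2$ as $m\geq 2$), so the sum is $m(m-1)\,F_m^{(1\to 2)}(n)$. I expect no real obstacle here; the only point requiring a word of care is the existence of the extending permutation, which hinges precisely on both endpoint pairs consisting of \emph{distinct} colors, so that the partial map is injective. (I note in passing that the statement as written has a minor typo, ``$s,r\in[M]$'', which should read ``$s,r\in[m]$''.)
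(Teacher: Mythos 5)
Your proposal is correct and follows essentially the same route as the paper: a letter-by-letter color-relabeling bijection induced by a permutation $\pi\in S_m$ with $\pi(s)=s'$, $\pi(r)=r'$ (the paper cites $2$-transitivity where you extend the partial injection explicitly), followed by partitioning the words counted by $S_m(n)$ over the $m(m-1)$ ordered endpoint pairs. No gaps; your remark about the typo ``$[M]$'' versus ``$[m]$'' is also accurate.
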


\begin{proof}
For fixed ordered pairs $(s,r)$ and $(s',r')$ with $s\neq r$ and
$s'\neq r'$, choose a permutation $\pi\in S_m$ such that $\pi(s)=s'$ and $\pi(r)=r'$.  Such a permutation exists because the symmetric group acts transitively on
ordered pairs of distinct elements of $[m]$.

We now relabel each color according to $\pi$. More precisely, if $w=w_1w_2\cdots w_{mn}$ is a color-word over $[m]$, define
\[
    \Phi_\pi(w):=\pi(w_1)\pi(w_2)\cdots \pi(w_{mn}).
\]
Suppose that $w$ is counted by $F_m^{(s\to r)}(n)$. Then $w$ begins with
$s$ and ends with $r$, and therefore $\Phi_\pi(w)$ begins with
$\pi(s)=s'$ and ends with $\pi(r)=r'$. Moreover, since $\pi$ is a bijection
of $[m]$, it preserves multiplicities: each color still appears exactly
$n$ times after the relabeling. Finally, the Smirnov condition is also
preserved. Indeed, whenever $w_t\neq w_{t+1}$, the injectivity of $\pi$
implies $\pi(w_t)\neq \pi(w_{t+1})$.
 
Thus $\Phi_\pi(w)$ has no two consecutive equal colors. Hence
$\Phi_\pi$ maps the family of words counted by $F_m^{(s\to r)}(n)$ into
the family of words counted by $F_m^{(s'\to r')}(n)$.

This map is invertible. Its inverse is obtained by relabeling colors
according to $\pi^{-1}$. In particular, $\Phi_{\pi^{-1}}\circ \Phi_\pi=\mathrm{id}$ and $\Phi_\pi\circ \Phi_{\pi^{-1}}=\mathrm{id}$. Therefore $\Phi_\pi$ is a bijection between the two families of words, and
so $F_m^{(s\to r)}(n)=F_m^{(s'\to r')}(n)$.

It remains only to sum over the possible endpoint pairs. By definition,
\[
    S_m(n)
    =
    \sum_{\substack{s,r\in[m]\\ s\neq r}}
    F_m^{(s\to r)}(n).
\]
By the bijection just proved, all terms in this sum are equal. Since there
are $m(m-1)$ ordered pairs $(s,r)$ with $s\neq r$, each term is equal to
$F_m^{(1\to 2)}(n)$, and hence
\[
    S_m(n)=m(m-1)F_m^{(1\to 2)}(n).
\]
This proves the result.
\end{proof}
 
Theorem~\ref{th:closed} gives a closed inclusion-exclusion formula for $F_m(a_1,\dots,a_m; s\!\to\!r)$. 

\begin{theorem}\label{th:closed}
Let $m\ge2$, and let $a_1,\dots,a_m\ge0$ with total length $mn=\sum_{i=1}^m a_i$.
For fixed endpoints $s,r\in[m]$, define the adjusted counts $a_i' := a_i - \mathbf{1}_{\{i=s\}} - \mathbf{1}_{\{i=r\}}$. 
 
If any $a_i'<0$ (that is, if color $i$ would be used fewer than twice, once at each endpoint),
then $F_m(a_1,\dots,a_m; s\!\to\!r)=0$.
Otherwise:
\begin{equation}\label{eq:closedFm}
F_m(a_1,\dots,a_m; s\!\to\!r)
=
\!\!\sum_{k_1=0}^{a_1-1}\cdots\sum_{k_m=0}^{a_m-1}
(-1)^{k_1+\cdots+k_m}
\!\left(\prod_{i=1}^m \binom{a_i-1}{k_i}\right)
\frac{(mn-2-\sum_i k_i)!}{\prod_{i=1}^m (a_i'-k_i)!}.
\end{equation}
\end{theorem}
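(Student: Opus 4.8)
The plan is to prove \eqref{eq:closedFm} by a direct inclusion--exclusion (``cluster'') argument applied to the equal-adjacency positions of a word. Throughout I read $F_m(a_1,\dots,a_m;\,s\!\to\!r)$ as the number of Smirnov words $w=w_1w_2\cdots w_N$ over $[m]$ of length $N=\sum_i a_i$ in which color $i$ occurs exactly $a_i$ times, with $w_1=s$ and $w_N=r$. The vanishing claim when some $a_i'<0$ is immediate for $N\ge 2$: a valid word places color $i$ at each endpoint equal to $i$, and when $s=r=i$ those two positions are distinct because $N\ge 2$, so necessarily $a_i\ge \mathbf{1}_{\{i=s\}}+\mathbf{1}_{\{i=r\}}$, i.e.\ $a_i'\ge 0$; hence I may assume henceforth that all $a_i'\ge 0$.

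First I would discard the Smirnov condition and work in the larger family $X$ of \emph{all} words over $[m]$ with content $(a_1,\dots,a_m)$ and $w_1=s$, $w_N=r$. For $w\in X$ set $E(w)=\{t:w_t=w_{t+1}\}$; since $\sum_{D\subseteq E(w)}(-1)^{|D|}=\mathbf{1}_{\{E(w)=\varnothing\}}$, interchanging the two summations gives $F_m(a_1,\dots,a_m;\,s\!\to\!r)=\sum_D(-1)^{|D|}\,\bigl|\{w\in X:\ D\subseteq E(w)\}\bigr|$. Now a subset $D\subseteq\{1,\dots,N-1\}$ glues $\{1,\dots,N\}$ into maximal intervals (``clusters'') $I_1,\dots,I_b$ read left to right, and $p_j:=|I_j|$ yields a composition $(p_1,\dots,p_b)$ of $N$; conversely every composition of $N$ comes from a unique such $D$, with $|D|=\sum_j(p_j-1)=N-b$. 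A word $w\in X$ with $D\subseteq E(w)$ is constant on each cluster, so it is precisely the datum of a coloring $x\colon\{1,\dots,b\}\to[m]$ with $x_1=s$, $x_b=r$ and $\sum_{j:\,x_j=i}p_j=a_i$ for all $i$. Therefore $F_m(a_1,\dots,a_m;\,s\!\to\!r)=\sum_{b\ge 1}(-1)^{N-b}C_b$, where $C_b$ counts the pairs (composition of $N$ into $b$ parts, admissible coloring $x$).

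To evaluate $C_b$ I would group by the profile $b_i:=|\{j:x_j=i\}|$, so $\sum_i b_i=b$. For a fixed profile the admissible $x$ are the length-$b$ words over $[m]$ with $b_i$ copies of $i$, first letter $s$, last letter $r$; removing the two endpoint letters shows their number equals $\frac{(b-2)!}{\prod_i(b_i-\mathbf{1}_{\{i=s\}}-\mathbf{1}_{\{i=r\}})!}$ for $b\ge 2$ (the $b=1$ term is nonzero only if $s=r$ and $N\le 1$, which I set aside). Independently, once $x$ is fixed, choosing the cluster sizes is the same as choosing, for each color $i$, an ordered composition of $a_i$ into $b_i$ positive parts, in $\prod_i\binom{a_i-1}{b_i-1}$ ways, which also forces $1\le b_i\le a_i$. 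Hence
\[
F_m(a_1,\dots,a_m;\,s\!\to\!r)=\sum_{b_1,\dots,b_m}(-1)^{\,N-\sum_i b_i}\,\frac{\bigl(\sum_i b_i-2\bigr)!}{\prod_i\bigl(b_i-\mathbf{1}_{\{i=s\}}-\mathbf{1}_{\{i=r\}}\bigr)!}\prod_{i=1}^m\binom{a_i-1}{b_i-1},
\]
and the substitution $k_i:=a_i-b_i$ finishes it: it turns $1\le b_i\le a_i$ into $0\le k_i\le a_i-1$, and using $\binom{a_i-1}{b_i-1}=\binom{a_i-1}{k_i}$, $\sum_i b_i=N-\sum_i k_i$ (so $(-1)^{N-\sum b_i}=(-1)^{\sum k_i}$ and $(\sum b_i-2)!=(N-2-\sum_i k_i)!$), and $b_i-\mathbf{1}_{\{i=s\}}-\mathbf{1}_{\{i=r\}}=a_i'-k_i$, the display becomes exactly \eqref{eq:closedFm}. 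The case $s=r$ is handled identically once the stray $b=1$ term is checked to contribute only in the trivial range $N\le 1$.

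The step I expect to be the main obstacle is the bookkeeping in the $C_b$ computation: checking that $(w,D)$ corresponds bijectively to a pair (composition of $N$, coloring of clusters) with no double counting; that the endpoint constraints $x_1=s$, $x_b=r$ enter the multinomial coefficient correctly and uniformly for both $s\ne r$ and $s=r$; and that the boundary situations (some $a_i=0$, or $N$ very small) are consistent with the stated vanishing convention. Everything past the substitution $k_i=a_i-b_i$ is routine. If the combinatorics get cumbersome, an equivalent route is to obtain, via the transfer matrix $\tilde T_{ij}=z_j(1-\delta_{ij})$ and the Sherman--Morrison identity, the endpoint-marked generating function $\sum_{(a_i)}F_m(a_\bullet;\,s\!\to\!r)\prod_i z_i^{a_i}=\frac{z_s z_r}{(1+z_s)(1+z_r)}\bigl(1-\sum_i\frac{z_i}{1+z_i}\bigr)^{-1}$ for $s\ne r$, then extract $[z_1^{a_1}\cdots z_m^{a_m}]$ and apply the same reciprocity substitution.
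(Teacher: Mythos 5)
Your proposal is correct and follows essentially the same route as the paper: an inclusion--exclusion over equal-adjacency positions in which runs of a repeated color are merged into blocks, with $\binom{a_i-1}{k_i}$ ways to choose the merges (your parametrization by block counts $b_i=a_i-k_i$ is equivalent) and the multinomial factor $(N-2-\sum_i k_i)!/\prod_i(a_i'-k_i)!$ arranging the blocks with the endpoints fixed. Your write-up is in fact the more rigorous rendering of that argument, since you ground the signs in the identity $\sum_{D\subseteq E(w)}(-1)^{|D|}=\mathbf{1}_{\{E(w)=\varnothing\}}$ and make the cluster--composition bijection explicit.
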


\begin{proof}
 
If \(a_i'<0\) for some \(i\), then the prescribed endpoints require more
occurrences of color \(i\) than are available, and therefore no such word exists.
In this case both sides are interpreted as zero. We may thus assume that
\(a_i'\geq 0\) for all \(i\).

We first fix the endpoints. The first letter is required to be \(s\), and the
last letter is required to be \(r\). After these two letters have been fixed,
there remain \(mn-2\) positions to be filled with \(a_i'\) copies of color \(i\),
for each \(i\in[m]\). If the adjacency condition were ignored, the number of
such words would be $\frac{(mn-2)!}{\prod_{i=1}^m a_i'!}$.
 
It remains to impose the Smirnov condition, namely the condition that no two
consecutive letters are equal. We do this by inclusion-exclusion.  For a fixed color \(i\), there are \(a_i-1\) possible adjacencies
between the \(a_i\) occurrences of that color. Suppose that \(k_i\) of these
adjacencies are chosen to be merged. This can be done in $\binom{a_i-1}{k_i}$ ways, and after the merging the \(a_i\) occurrences of color \(i\) are replaced
by \(a_i-k_i\) blocks of color \(i\). If we perform this construction
simultaneously for all colors, with $\mathbf{k}=(k_1,\ldots,k_m)$, $0\leq k_i\leq a_i-1$,  then the total number of blocks becomes $mn-\sum_{i=1}^m k_i$. The sign associated with this choice in the inclusion-exclusion sum is $(-1)^{k_1+\cdots+k_m}$.

Once the merges have been chosen, the first and last blocks are still prescribed
to have colors \(s\) and \(r\), respectively. Hence the number of remaining
interior blocks is $mn-2-\sum_{i=1}^m k_i$. 
 
For each color \(i\), the number of interior blocks of color \(i\) is $a_i'-k_i$,  
because one endpoint block is removed if \(i=s\), one endpoint block is removed
if \(i=r\), and \(k_i\) merges have been performed among the occurrences of
color \(i\). Therefore, for this fixed vector \(\mathbf{k}\), the number of
corresponding block arrangements is
\[
    \frac{\left(mn-2-\sum_{i=1}^m k_i\right)!}
         {\prod_{i=1}^m (a_i'-k_i)!}.
\]
As usual, a term is interpreted as zero whenever one of the factorials in the
denominator has a negative argument.

Multiplying by the number of ways to choose the merges for each color and then
summing over all possible vectors \(\mathbf{k}\), inclusion-exclusion gives \eqref{eq:closedFm}.  
\end{proof}

\section{From Words to Hamiltonian Cycles}\label{sec:FromTo}

Let $K_{n,n,\dots,n}$ be the complete $m$-partite graph
with vertex partition $V = V_1 \cup \cdots \cup V_m$,
$|V_i|=n$, and edges only between different parts.

\begin{theorem}\label{th:bij}
There exists a bijection
between admissible words \(w\in\mathcal{W}_m(n)\) and Hamiltonian directed paths
in \(K_{n,n,\ldots,n}\) whose endpoints lie in distinct partite sets.
\end{theorem}

\begin{proof}
Fix, for each \(i\in[m]\), a bijection between the alphabet
\(\mathcal{A}_i\) and the partite set \(V_i\). Thus each letter of
\(\mathcal{A}_i\) may be identified with a unique vertex of \(V_i\).

Let \(w=w_1w_2\cdots w_{mn}\in\mathcal{W}_m(n)\). Replacing each letter
\(w_t\in\mathcal{A}_{i_t}\) by its corresponding vertex
\(v_t\in V_{i_t}\) gives a sequence $v_1\to v_2\to\cdots\to v_{mn}$. 
 
Since \(w\) is admissible, all letters appear exactly once, and hence all
vertices appear exactly once. Moreover, consecutive letters of \(w\) belong
to distinct subsets of the alphabet, so consecutive vertices belong to distinct
partite sets. They are therefore adjacent in \(K_{n,n,\ldots,n}\). Hence the
sequence is a Hamiltonian directed path. The endpoint condition in the definition
of \(\mathcal{W}_m(n)\) also implies that the first and last vertices lie in
distinct partite sets.

Conversely, let $P: v_1\to v_2\to\cdots\to v_{mn}$ be a Hamiltonian directed path in \(K_{n,n,\ldots,n}\) whose endpoints lie in
distinct partite sets. Replacing each vertex \(v_t\in V_i\) by its corresponding
letter in \(\mathcal{A}_i\) gives a word $w_P=w_1w_2\cdots w_{mn}$. 
 
Since \(P\) is Hamiltonian, every letter appears exactly once. Since edges in a
complete multipartite graph join only vertices from distinct partite sets, no two
consecutive letters of \(w_P\) belong to the same subset \(\mathcal{A}_i\).
Finally, the endpoints of \(P\) lie in distinct partite sets, so the first and
last letters of \(w_P\) also belong to distinct subsets. Thus
\(w_P\in\mathcal{W}_m(n)\).

The two constructions are inverse to each other, and therefore establish the  bijection.
\end{proof}

\begin{corollary}\label{cor:Hm}
Let \(H_m(n)\) be the number of undirected Hamiltonian cycles in
\(K_{n,n,\ldots,n}\). Then
\begin{equation}\label{eq:Hm}
    H_m(n)
    =
    \frac{W_m(n)}{2mn}
    =
    \frac{(n!)^m}{2mn}S_m(n).
\end{equation}
\end{corollary}

\begin{proof}
By Theorem~\ref{th:bij}, \(W_m(n)\) counts the Hamiltonian directed paths in
\(K_{n,n,\ldots,n}\) whose endpoints lie in distinct partite sets. Since such
endpoints are adjacent, each of these paths can be closed into a Hamiltonian
cycle.

Conversely, every undirected Hamiltonian cycle can be read in two orientations
and can be opened at any of its \(mn\) vertices. Hence each undirected
Hamiltonian cycle gives exactly \(2mn\) Hamiltonian directed paths of the type
counted by \(W_m(n)\). Therefore, $H_m(n)=\frac{W_m(n)}{2mn}$. 
 
Using the reduction $ W_m(n)=(n!)^m S_m(n)$,  we obtain $H_m(n)=\frac{(n!)^m}{2mn}S_m(n)$.
\end{proof}

Tables~\ref{tab:first_values_S} and~\ref{tab:first_values_H} report the first computed values of 
$S_{m}(n)$ and $H_{m}(n)$, respectively, for $1\leq m\leq5$ and $1\leq n\leq7$. 
Whenever possible, the corresponding sequence identifier from the \textsc{OEIS} (On-Line Encyclopedia of Integer Sequences \cite{OEIS}) is also provided.

\begin{table}[ht]
\resizebox{\columnwidth}{!}{
\begin{tabular}{lll}
\toprule
 $m$& $S_{m}(n)$  & OEIS                              \\ \midrule
1  & 0, 0, 0, 0, 0, 0, 0,\ldots  & -  \\  
2  & 2, 2, 2, 2, 2, 2, 2,\ldots & - \\  
3 & 6, 24, 132, 804, 5196, 34872, 240288,\ldots & \seqnum{A110707}$(n)$ \\  
4 & 24, 744, 33960, 1820232, 106721784, 6627719256, 428434032456,\ldots  & -                                 \\  
5 & 120, 35160, 16841160, 9960343920, 6633577962720, 4768569352231680,  3615532424230568640,\ldots   & -  \\ \bottomrule
\end{tabular}
 }
\caption{First values of $S_{m}(n)$.}
\label{tab:first_values_S}
\end{table}

\begin{table}[ht]
\centering
\resizebox{\columnwidth}{!}{
\begin{tabular}{lll}
\toprule
$m$ &  $H_{m}(n)$ & OEIS         \\ \midrule
1 &  0, 0, 0, 0, 0, 0, \ldots  & -            \\ 
2 & 0, 1, 6, 72, 1440, 43200, \ldots & \seqnum{A010796}$(n-1)$  \\  
3 & 1, 16, 1584, 463104, 299289600, 361552896000,  \ldots  & \seqnum{A307924}$(n)$   \\  
4 & 3, 744, 1833840, 18872165376,  553245728256000,    37106744352952320000,   \ldots  &\seqnum{A381326}$(n)$   \\  
5 & 12, 56256, 4365228672,  1982761838641152, 2390860800330129294300,  15377981531746493634969600000,    \ldots & -   \\ \bottomrule
\end{tabular}
}
\caption{First values of $H_{m}(n)$.}\label{tab:first_values_H}

\end{table}

\section{Asymptotic analysis}\label{sec:Asymptotic}

The exact formulas obtained above are suitable for computation, but they also
give useful information about the growth of the number of Hamiltonian cycles.
We record here the main asymptotic estimate suggested by the
inclusion-exclusion formula and by the standard probabilistic interpretation
of the Smirnov constraint.

The case \(m=2\) is exceptional: a balanced binary Smirnov word is necessarily
alternating, and hence \(S_2(n)=2\). Consequently, $H_2(n)=\frac{(n!)^2}{2n}$,  which is the classical formula for the complete bipartite graph \(K_{n,n}\).
Thus, in the discussion below, we focus on the case \(m\geq 3\).

\begin{proposition}\label{prop:asymptotic-estimate}
Fix \(m\geq 3\), and set $q_m=1-\frac{1}{m}$.  As \(n\to\infty\), the dominant asymptotic estimate is
\begin{equation}\label{eq:AsymptoticSm}
    S_m(n)
    \approx
    m(m-1)
    \frac{(mn)!}{(n!)^m}
    q_m^{mn-1}.
\end{equation}
Consequently,
\begin{equation}\label{eq:AsymptoticHm}
    H_m(n)
    \approx
    \frac{(m-1)(mn)!}{2n}
    q_m^{mn-1}.
\end{equation}
\end{proposition}

\begin{proof}
There are $\frac{(mn)!}{(n!)^m}$ balanced color-words of length \(mn\), that is, words in which each of the
\(m\) colors appears exactly \(n\) times. Among these words, the Smirnov
condition excludes equal adjacent colors.

For large \(n\), the colors are nearly uniformly distributed along the word.
Thus, the probability that two consecutive positions have the same color is
approximately \(1/m\), and the probability that a given adjacent pair is
admissible is approximately $q_m=1-\frac{1}{m}$.  Moreover, there are \(mn-1\) adjacent pairs, so the adjacency constraint contributes the
factor \(q_m^{mn-1}\).

Finally, the endpoint condition requires the first and last colors to be
distinct. Since there are \(m(m-1)\) ordered choices of distinct endpoint
colors, we obtain the estimate \eqref{eq:AsymptoticSm}. 
 
Using \eqref{eq:Hm} and \eqref{eq:AsymptoticSm}, we obtain  \eqref{eq:AsymptoticHm}. 
\end{proof}

\begin{corollary}[Logarithmic form]\label{cor:log-asymptotic}
For fixed \(m\geq 3\), the estimate of Proposition~\ref{prop:asymptotic-estimate} gives the following logarithmic form:
\begin{equation}\label{eq:AsymtoticLogaHm}
    \log H_m(n)
    =
    \log(m-1)-\log(2n)
    +\log((mn)!)
    +(mn-1)\log q_m
    +O(1).
\end{equation}
Equivalently,
\begin{equation}\label{eq:AsymtoticLogaHm22}
    \log H_m(n)
    =
    mn\log n
    +
    n\bigl(m\log m+m\log q_m-m\bigr)
    +O(\log n).
\end{equation}
In particular, $\log H_m(n)=mn\log n+\Theta(n)$, which shows that the number of Hamiltonian cycles grows factorially, with an
exponential correction coming from the Smirnov adjacency constraint.
\end{corollary}

\begin{proof}
Taking logarithms in the estimate \eqref{eq:AsymptoticHm} gives \eqref{eq:AsymtoticLogaHm}. 
 
Using Stirling's formula,
\[
    \log((mn)!)
    =
    mn\log(mn)-mn+O(\log n),
\]
and writing \(\log(mn)=\log n+\log m\), we obtain \eqref{eq:AsymtoticLogaHm22}. Thus, the final statement follows immediately.
\end{proof}

It is also useful to look at successive ratios. If
\[
    R_m(n)=\frac{H_m(n+1)}{H_m(n)},
\]
then Proposition~\ref{prop:asymptotic-estimate} gives
\[
    R_m(n)
    \approx
    \frac{(m(n+1))!}{(mn)!}
    \frac{n}{n+1}
    q_m^m.
\]
Equivalently,
\[
    R_m(n)
    \approx
    \left(\prod_{k=1}^{m}(mn+k)\right)
    \frac{n}{n+1}
    q_m^m.
\]
Thus the growth of \(H_m(n)\) is mainly driven by the factorial factor
\((mn)!\), while the term \(q_m^{mn-1}\) accounts for the loss caused by the
adjacency restriction.

\section{Extensions and generalizations}\label{sec:extensions}

The word-path correspondence developed above extends naturally to complete
multipartite graphs with unequal part sizes. In this section, we briefly record
this extension, together with a closely related variant.

\subsection{The nonuniform complete multipartite graph}
Let \(K_{n_1,\ldots,n_m}\) be the complete \(m\)-partite graph with partite
sets of sizes \(n_1,\ldots,n_m\).  Let $\mathcal{A}
    =
    \mathcal{A}_1\cup\cdots\cup \mathcal{A}_m$, $|\mathcal{A}_i|=n_i\geq 1$,   and let $ N=\sum_{i=1}^m n_i$. 
 
We denote by \(S(n_1,\ldots,n_m)\) the number of color-words of length \(N\)
over the alphabet \([m]\) in which color \(i\) appears exactly \(n_i\) times,
no two consecutive colors are equal, and the first and last colors are distinct.

For fixed endpoints \(s\neq r\), let
\(F(n_1,\ldots,n_m;s\!\to\! r)\) be the number of such words that begin with
\(s\) and end with \(r\). 

Set $n_i'
    =
    n_i-\mathbf{1}_{\{i=s\}}-\mathbf{1}_{\{i=r\}}$.  If \(n_i'<0\) for some \(i\), then $F(n_1,\ldots,n_m;s\!\to\! r)=0$.  Otherwise, the same inclusion-exclusion argument used in the balanced case
gives
\begin{equation}\label{eq:IE-nonuniform}
F(n_1,\ldots,n_m;s\!\to\! r)
=
\sum_{k_1=0}^{n_1-1}
\cdots
\sum_{k_m=0}^{n_m-1}
(-1)^{\sum_i k_i}
\left(
    \prod_{i=1}^m \binom{n_i-1}{k_i}
\right)
\frac{
    \left(N-2-\sum_i k_i\right)!
}{
    \prod_{i=1}^m (n_i'-k_i)!
}.
\end{equation}
As usual, a term is interpreted as zero whenever a factorial in the denominator
has a negative argument. Summing over the possible endpoint pairs yields
\begin{equation}
    S(n_1,\ldots,n_m)
    =
    \sum_{\substack{s,r\in[m]\\ s\neq r}}
    F(n_1,\ldots,n_m;s\!\to\! r).
\end{equation}
\begin{theorem}\label{th:K-nonuniform}
The number of undirected Hamiltonian
cycles in \(K_{n_1,\ldots,n_m}\) is
\begin{equation}\label{eq:H-nonuniform}
    H(n_1,\ldots,n_m)
    =
    \frac{\prod_{i=1}^m n_i!}{2N}
    S(n_1,\ldots,n_m).
\end{equation}
\end{theorem}

\begin{proof}
The bijection from Theorem~\ref{th:bij} applies without change. A color-word
with multiplicities \(n_1,\ldots,n_m\) describes the sequence of partite sets
visited by a Hamiltonian path, and the Smirnov condition ensures that
consecutive vertices lie in distinct partite sets.

Once a color pattern is fixed, the \(n_i\) vertices in the \(i\)-th part can be
assigned to the \(n_i\) occurrences of color \(i\) in \(n_i!\) ways. Hence the
number of corresponding labeled Hamiltonian directed paths with endpoints in
distinct parts is $\prod_{i=1}^m n_i!\; S(n_1,\ldots,n_m)$.
 
Each undirected Hamiltonian cycle gives exactly \(2N\) such directed paths,
corresponding to the two orientations and the \(N\) possible choices of the
starting vertex. Dividing by \(2N\) gives \eqref{eq:H-nonuniform}.
\end{proof}

\begin{remark}
For \(m=3\), Theorem~\ref{th:K-nonuniform} gives an explicit answer for the
complete tripartite graph \(K_{p,q,r}\). Namely,
\[
    H(p,q,r)
    =
    \frac{p!q!r!}{2(p+q+r)}
    S(p,q,r),
\]
where \(S(p,q,r)\) is computed from the inclusion-exclusion formula
\eqref{eq:IE-nonuniform}. This answers the enumeration problem for undirected
Hamiltonian cycles in \(K_{p,q,r}\) posed by Donald E.~Knuth in
\cite{Ullman21102025}.
\end{remark}

\subsection{Directed cycles}

If the reverse orientation of a Hamiltonian cycle is counted as a different
cycle, then only the \(N\) cyclic rotations have to be identified. Therefore,
in the directed setting one obtains the following immediate variant.

\begin{corollary}\label{cor:directed}
The number of directed Hamiltonian cycles in \(K_{n_1,\ldots,n_m}\) is
\[
    H^{\rightarrow}(n_1,\ldots,n_m)
    =
    \frac{\prod_{i=1}^m n_i!}{N}
    S(n_1,\ldots,n_m).
\]
Equivalently,
\[
    H^{\rightarrow}(n_1,\ldots,n_m)
    =
    2H(n_1,\ldots,n_m).
\]
\end{corollary}

\begin{proof}
The proof is the same as that of Theorem~\ref{th:K-nonuniform}, except that a
directed Hamiltonian cycle has \(N\) linear representations rather than \(2N\),
since the two orientations are now distinct.
\end{proof}

\section{Conclusion}\label{sec:Conclusion}

In this paper, we established a direct bijection between admissible Smirnov words and
Hamiltonian paths in complete multipartite graphs. This correspondence allowed us to
derive exact enumeration formulas for Hamiltonian cycles in the balanced graph
\(K_{n,n,\ldots,n}\), using recurrence relations and inclusion-exclusion arguments for
constrained color-words. We also studied the asymptotic growth of these numbers and
extended the method to the nonuniform graph \(K_{n_1,\ldots,n_m}\).

Several natural directions remain open. One may seek sharper asymptotic estimates,
especially in the nonuniform case where the part sizes grow with fixed proportions. It
would also be interesting to study analogous bijections for other families of multipartite
or highly symmetric graphs, and to investigate whether related word models can be used
to count Hamiltonian cycles under additional constraints, such as prescribed endpoints,
forbidden transitions, or directed edge restrictions.

% \section*{Statements and Declarations}

% \subsection*{Funding}
% The author  declare that no funds, grants, or other support were received during the preparation of this manuscript.

% \subsection*{Competing Interests}
% The author declares that they have no conflict of interest.

% \subsection*{Author Contributions}

% The single author contributed to the entirety of the paper.

% \subsection*{Data Availability}
% There are none used.

\bibliographystyle{plainurl} 
\bibliography{bibou}

\end{document}